\documentclass[reqno,10pt]{amsart}

\usepackage{mathrsfs}
\usepackage{color,hyperref}
\usepackage{amsmath}
\usepackage{amssymb}

\textwidth=6in \textheight=8.5in \topmargin 0cm \oddsidemargin 0.3cm
\evensidemargin 0.3cm

\newtheorem{thm}{Theorem}[section]

\long\def\oo#1{}

%%%%%%%%%%%%%%%%%%%%psf3
\newtheorem{theorem}[thm]{Theorem}
\newtheorem{lm}[thm]{Lemma}

\newtheorem{pro}[thm]{Proposition}
\newtheorem{defi}[thm]{Definition}
\newtheorem{ex}[thm]{Example}

\newtheorem{conjecture}[thm]{Conjecture}

\usepackage[numbers]{natbib}

\begin{document}

\title[Semifields and a theorem of Abhyankar]{Semifields and a theorem of Abhyankar} % about fields as factors of subrings of polynomials}
\author{V\'\i t\v ezslav Kala}
\address{Faculty of Mathematics and Physics, Department of Algebra, Charles University, Sokolovsk\' a 83,
18600 Praha 8, Czech Republic}
%   Current address
\address{Mathematisches Institut, Georg-August Universit\" at G\" ottingen, Bunsenstra\ss e 3-5, 37073 G\" ottingen, 
Germany}
\email{vita.kala@gmail.com}

%    General info
\subjclass[2010]{12K10, 13B25, 16Y60}

\date{\today}

\keywords{Abhyankar's construction, semiring, semifield, finitely generated, additively idempotent}

\begin{abstract}
Abhyankar \cite{abh} proved that every field of finite transcendence degree over $\mathbb{Q}$ or over a finite field is a homomorphic image of a subring of the ring of polynomials $\mathbb{Z}[T_1, \dots, T_n]$ (for some $n$ depending on the field). 
We conjecture that his result can not be substantially strengthened and show that our conjecture implies a well-known conjecture on the
additive idempotence of semifields that are finitely generated as semirings \cite{BHJK}, \cite{JKK}.
\end{abstract}

\maketitle

\section{Introduction}

A classical fact says that if a field is finitely generated as a ring, then it is finite -- in other words, no infinite field is a homomorphic image of
the ring of polynomials $\mathbb Z[T_1, \dots, T_n]$.
Surprisingly, Abhyankar in 2011 proved the following theorem saying that this is not true when we consider fields as factors of subrings of $\mathbb Z[T_1, \dots, T_n]$. 

\begin{theorem}[\cite{abh}, Proposition 1.2]\label{abh thm}
Let $F$ be a field of finite transcendence degree over $\mathbb{Q}$ or over a finite field $\mathbb F_q$.
Then there is a ring $B\subset A=\mathbb{Z}[T_1, \dots, T_n]$ (for a suitable $n$) and a maximal ideal $I$ of $B$ such that $F\simeq B/I$.
\end{theorem}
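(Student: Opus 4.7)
My plan is to proceed by induction on the transcendence degree $d$ of $F$ over its prime subfield $k\in\{\mathbb Q,\mathbb F_p\}$. The starting observation is that $F$ is the field of fractions of a finitely generated $\mathbb Z$-subalgebra $R\subset F$; the main difficulty is that passing from $R$ to $\mathrm{Frac}(R)=F$ requires inverting infinitely many elements (in particular all rational primes when $\mathrm{char}\,F=0$), which naively would demand infinitely many new polynomial variables. The theorem asserts that this infinite data can nonetheless be encoded inside a single polynomial ring $\mathbb Z[T_1,\ldots,T_n]$ of fixed finite dimension, by choosing a cleverly-constructed non-finitely-generated subring $B$.

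For the base case $d=0$ in positive characteristic, $F=\mathbb F_q$ is finite, and it suffices to take $B=A=\mathbb Z[T]$ and $I=(p,g(T))$ where $g(T)\in\mathbb Z[T]$ is a monic lift of the minimal polynomial of a generator of $\mathbb F_q/\mathbb F_p$. For the base case $d=0$ in characteristic zero, $F$ is a number field, and the weak Nullstellensatz forbids $F$ from being a residue field of the full polynomial ring $\mathbb Z[T_1,\ldots,T_n]$ (whose residue fields at maximal ideals are all finite); hence $B$ must be a proper, non-finitely-generated subring. I would take $A=\mathbb Z[T,U]$, arrange the image of $T$ to be a primitive element $\theta\in F/\mathbb Q$ with integer minimal polynomial, and include in $B$ a countable family of polynomials $h_p\in A$ (one for each prime $p$) with relations $p h_p-1\in I$, so that the image of $h_p$ in $B/I$ is $1/p$; combined with $\theta$, this yields all of $F$. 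For the inductive step at $d\ge 1$, I would pick a transcendence basis $\tau_1,\ldots,\tau_d$ of $F/k$ and a primitive element over $k(\tau_1,\ldots,\tau_d)$, apply the inductive hypothesis to an auxiliary field of transcendence degree $d-1$, adjoin fresh polynomial variables for $\tau_d$ and the algebraic generator, and encode the algebraic data on top as in the finite-field case.

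The main obstacle is clearly the characteristic-zero base case: constructing polynomials $h_p$ for every prime $p$, all living in a single polynomial ring $\mathbb Z[T,U]$ of fixed finite dimension, such that the ideal $I\subset B$ generated by the relations $\{p h_p-1\}$ (together with the algebraic relations on $\theta$) is a \emph{maximal} ideal of $B$ whose quotient is exactly $F$. The delicate balance is between $I$ being large enough to make $B/I$ a field and small enough not to kill any element of $F$. I would expect the bulk of the work to go into engineering explicit polynomial identities---perhaps of CRT or $p$-adic approximation flavor---to ensure that this balance holds simultaneously for all primes.
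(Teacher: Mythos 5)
Your outline correctly identifies the shape of the construction and, importantly, where the difficulty sits, but it stops exactly at the point where the actual content of the theorem begins. The characteristic-zero base case is not a matter of ``engineering identities of CRT or $p$-adic flavor'' that one can reasonably defer: if you pick a countable family $h_p\in\mathbb{Z}[T,U]$, then since $\mathbb{Z}[T,U]$ has Krull dimension $3$, any four of the $h_p$ are algebraically dependent over $\mathbb{Z}$, so the subring $B=\mathbb{Z}[\theta\text{-data},h_2,h_3,h_5,\dots]$ comes with an enormous supply of \emph{unavoidable} polynomial relations among its generators, and every single one of them must be satisfied by the intended values $1/p$ in $F$ for the map $B\twoheadrightarrow F$ to be well-defined at all. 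Declaring the relations $ph_p-1\in I$ does not address this; the problem is not to impose relations but to verify that the relations already forced by the ambient ring $A$ are compatible with the target values (note that the map provably does \emph{not} extend to $A$, so no specialization argument is available). This is the genuine gap.

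For comparison, the paper does not reprove the theorem --- it cites Abhyankar --- but it records the explicit construction for $F=\mathbb{Q}$, $n=2$: one takes $B=\mathbb{Z}[f_2,f_3,\dots]$ with $f_2=T_1T_2$ and the \emph{recursion} $f_{n+1}=(nf_n-1)T_2$, and sends $f_n\mapsto 1/n$. The recursive choice is the missing idea in your plan: each new generator is a product of an element that must map to $0$ with the ``blow-up coordinate'' $T_2$, and it is exactly this structure (the $f_n$ are coordinates on successive quadratic transforms) that lets Abhyankar control all relations among the generators and prove that $\ker$ of the resulting map is a maximal ideal with quotient $F$. That verification is carried out in \cite{abh} via local rings and blow-ups; the paper explicitly remarks that no elementary proof is known. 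So your proposal is a reasonable reduction of the problem to its hard core, but the hard core --- choosing the family inside a fixed two-variable ring so that consistency can be proved, and then proving it --- is absent.
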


For example, there is a subring $B\subset \mathbb Z[T_1, T_2]$ and an epimorphism $B\twoheadrightarrow \mathbb Q$, described in Section \ref{abh construction} below.

Rings $B$ that arise from Abhyankar's construction have several unusual properties. 
The goal of this short note is to explore them and to study the (im)possibility of a generalization and a connection to the theory of semirings.

\

Just to recall, by a \textit{semiring} $S(+, \cdot)$ we here mean a set $S$ with addition $+$ and multiplication $\cdot$ that are both commutative and associative, and such that multiplication distributes over addition. A semiring $S(+, \cdot)$ is a \textit{semifield} if moreover $S(\cdot)$ is a group
(such a structure is also occasionally called a parasemifield \cite{BHJK}, \cite{JKK}; note that unlike our definition, sometimes a semifield is defined to have a zero element).
A semiring is  \textit{additively idempotent} if $a+a=a$ for all $a\in S$.

Semirings are a very natural generalization of rings that has been widely studied, not only from purely algebraic perspective, but also for their applications in cryptography, dequantization, tropical mathematics, non-commutative geometry, and the connection to logic via MV-algebras and lattice-ordered groups 
\cite{BHJK}, \cite{BCM},  \cite{DNG}, \cite{NL}, \cite{G}, \cite{Le}, \cite{litvinov}, \cite{monico}, \cite{M}, \cite{W}, \cite{WW}, and \cite{zumbragel}. We refer the reader to the aforementioned works for further history and references.

\

Particularly important and interesting are the problems of classifying various classes of semirings.
In order to classify all ideal-simple (commutative) semirings \cite{BHJK}, one needs to study the following conjecture that extends the above-mentioned classical result on fields that are finitely generated as rings:

\begin{conjecture}[\cite{BHJK}]\label{main-conj}
Every semifield which is finitely generated as a semiring is additively idempotent.
\end{conjecture}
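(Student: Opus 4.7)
The plan is to try to prove the conjecture by contradiction, routing through Abhyankar's theorem to convert an alleged counterexample into a statement about subrings of $\mathbb{Z}[T_1,\dots,T_n]$. Suppose $S$ is a semifield, finitely generated as a semiring by $x_1,\dots,x_n$, and \emph{not} additively idempotent.

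\textbf{Step 1 (produce a ring).} In any semifield, $1+1=1$ would force $a+a=a(1+1)=a$ for every $a$, so non-idempotence forces $1+1\neq 1$; iterating, and using that a parasemifield has no zero, no relation $n\cdot 1 = m\cdot 1$ with $n>m$ can hold in $S$ (such a relation would multiply out to absurdity in the multiplicative group), so $\mathbb{N}\hookrightarrow S$, and the multiplicative group structure then promotes this to $\mathbb{Q}^{+}\hookrightarrow S$. A standard dichotomy for semifields (cf.\ \cite{BHJK,JKK}) gives that $S$ is additively cancellative, so the Grothendieck difference construction $R:=S-S$ is a genuine commutative ring, finitely generated as a $\mathbb{Z}$-algebra by $x_1,\dots,x_n$, and containing $\mathbb{Q}$.

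\textbf{Step 2 (invoke Abhyankar).} Pick a prime ideal $P\subset R$ that avoids the image of some fixed nonzero element of $S$ and set $F=\operatorname{Frac}(R/P)$. Then $F$ is finitely generated as a field over $\mathbb{Q}$, hence has finite transcendence degree, and Theorem \ref{abh thm} produces a subring $B\subset\mathbb{Z}[T_1,\dots,T_n]$ and a maximal ideal $I\subset B$ with $F\cong B/I$. The composition $S\hookrightarrow R\twoheadrightarrow R/P\hookrightarrow F\cong B/I$ transports $S$ onto a finitely generated, non-additively-idempotent sub-semiring of $B/I$ in which every nonzero element is a unit.

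\textbf{Step 3 (the obstacle).} The crux is now to rule out the existence of such a ``positive cone'' inside an Abhyankar quotient $B/I$. This is exactly where the rigidity conjecture on Abhyankar's construction advertised in the abstract will be needed: it should constrain the possible shape of $B$ and of the lifts to $B$ of units/additive relations in $B/I$ strongly enough to forbid the image of $S$ obtained in Step 2. Steps 1 and 2 are essentially standard semifield theory plus commutative-algebra bookkeeping; the real difficulty — and the entire content of the remainder of the paper — is isolating the correct rigidity property of Abhyankar-style subrings of $\mathbb{Z}[T_1,\dots,T_n]$ and showing that it is incompatible with the multiplicative/additive data produced by a non-idempotent, finitely generated semifield. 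I expect no unconditional proof of Conjecture \ref{main-conj} along these lines, only the reduction to such a rigidity statement.
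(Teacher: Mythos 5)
The statement you are trying to prove is an open conjecture: the paper does not prove Conjecture \ref{main-conj}, it only establishes the conditional result (Theorem \ref{3.3.1}) that Conjecture \ref{ring conj} implies it. So the honest endpoint of your proposal --- ``only the reduction to such a rigidity statement'' --- is at least in the right spirit. But the reduction you sketch breaks down at Step 1, and in a way the paper explicitly warns about. You assert that a non-idempotent finitely generated semifield $S$ is additively cancellative, so that $R=S-S$ is a genuine ring containing $S$ and $\mathbb{Q}$. This is false: by \cite[Proposition~1.18]{KKK} (quoted in the introduction of the paper), a finitely generated semiring containing $\mathbb{Q}^+$ is \emph{never} additively cancellative, so $S-S$ is not well-defined and $S$ does not embed into its Grothendieck ring. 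A quick sanity check confirms something must be wrong: if Step 1 were correct you would be finished without Abhyankar at all, since a finitely generated $\mathbb{Z}$-algebra has all residue fields at maximal ideals finite and hence cannot contain $\mathbb{Q}$. Steps 2 and 3 inherit this collapse, because there is no ring $R$ and no field $F$ receiving a copy of $S$.

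The paper's actual route avoids this by never trying to embed $S$ itself into a ring. It takes the defining surjection $\varphi:\mathbb{N}[T_1,\dots,T_n]\twoheadrightarrow S$, restricts to the preimage $B^+=\varphi^{-1}(P)$ of the sub-semifield $P$ of elements bounded between positive rationals (which \emph{is} additively cancellative), and forms the difference ring $B=B^+-B^+$ inside $\mathbb{Z}[T_1,\dots,T_n]$ with $I=\ker\varphi^{\pm}$. Abhyankar's theorem then plays the opposite role to the one you assign it: it is an \emph{obstruction}, showing that rings $B$ with $\mathbb{Q}\subset B/I$, $IA=A$ and $QF(B)=\mathbb{Q}(T_1,\dots,T_n)$ genuinely exist, so no contradiction can be extracted from conditions a), b), d) alone. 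The entire content of the paper is isolating the additional condition c) (about elements $\ell=\ell_0+a$ with $\varphi(\ell)+1=\varphi(\ell)$, which kill every polynomial relation over $B$ modulo $I$) that the semifield forces but that Abhyankar's rings fail, and packaging this as Conjecture \ref{ring conj}. If you want to salvage your approach, replace Step 1 by this pullback construction and aim your ``rigidity statement'' at condition c).
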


A lot of progress has already been made on Conjecture \ref{main-conj}: building on the results from \cite{KK}, \cite{KKK}, the conjecture was 
proved for the case of two generators in \cite{JKK}. Recently, additively idempotent semifields that are finitely generated as semirings were classified \cite{Ka} using their correspondence with lattice-ordered groups. This result then provides new tools for attacking the conjecture in the case of more generators \cite{KK2}. 
Some partial results have also been obtained for a generalization of this problem to divisible semirings (instead of semifields) \cite{KeKo}, \cite{KoLa}, \cite{Ko}.

\

In this short note we consider the connection between semirings, semifields and rings. The basic construction is that of the difference ring %(also sometimes called the Grothendieck ring) 
$S-S$ of a semiring $S$. If $S$ is additively cancellative (i.e., $a+c=b+c$ implies $a=b$ for all $a, b, c\in S$), we can define $S-S$ as the set of all (formal) differences $s-t$ of elements of $S$ with addition and multiplication naturally extended from the semiring. 
The fact that $S$ is additively cancellative ensures that $S-S$ is well-defined and that $S\subset S-S$.

The situation is much more complicated in the case of semifields that might violate Conjecture \ref{main-conj} -- it is easy to show that they can not be additively cancellative. 
In fact, more generally assume that we have a finitely generated semiring $S$ that contains the semifield of positive rational numbers $\mathbb Q^+$. Then we know that $S$ is not additively cancellative \cite[Proposition 1.18]{KKK}, and so the difference ring $S-S$ is not well-defined. One could instead consider the Grothendieck ring $G(S)$, but we still would not have $S\subset G(S)$.

However, we can proceed somewhat less directly: $S$ is finitely generated, and so it is a factor of the polynomial semiring $\mathbb N[T_1, \dots, T_n]$. Let $\varphi: \mathbb N[T_1, \dots, T_n]\twoheadrightarrow S$ be the defining epimorphism. $\mathbb Q^+\subset S$ is additively cancellative, and so we can consider
the difference ring $B:=\varphi^{-1}(\mathbb Q^+)-\varphi^{-1}(\mathbb Q^+)\subset \mathbb Z[T_1, \dots, T_n]$ and extend $\varphi$ to a surjection $B\twoheadrightarrow \mathbb Q$. The existence of such a $B$ is purely a ring-theoretic statement. If such a $B$ did not exist, we would have obtained a contradiction with the existence of $S$. 
However, precisely such rings were constructed by Abhyankar \cite{abh}, and so this naive approach fails.

Despite this attempt not working, there are finer ways of translating the problem to the setting of rings. As our main result, we show in Theorem \ref{3.3.1} that the existence of a semifield which is finitely generated as a semiring and not additively idempotent (i.e., one violating the Conjecture \ref{main-conj}) implies the existence of a ring which would contradict the following conjecture. 

\begin{conjecture}\label{ring conj}
The following situation can not happen:

Let $A=\mathbb{Z}[T_1, \dots, T_n]$. There exists a subring $B$ of $A$ and an ideal $I$ of $B$ such that

a) $QF(B)=\mathbb{Q}(T_1, \dots, T_n)$ ($QF$ denotes the quotient field).

b) $IA=A$.

c) Let \ $A^+=\mathbb{N}[T_1, \dots, T_n]$. Then there is $\ell_0\in A^+$ such that for all $a\in A^+$ and all $\ell=\ell_0+a\in A$, we have 
if $h(\ell)=0$ for some $h(X)\in B[X]$, then $h(X)\in I[X]$.

d) $\mathbb{Q}\subset B/I$.
\end{conjecture}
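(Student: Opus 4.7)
The plan is to argue by contradiction: assume that $(B, I, \ell_0, n)$ satisfying (a)--(d) exists, and produce a polynomial $h(X) \in B[X] \setminus I[X]$ vanishing at some $\ell = \ell_0 + a$ with $a \in A^+$, directly contradicting (c).

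First I exploit (a) and (b) to produce a ``uniform common denominator'' that is forced into $I$. By (a), for each $i$ I may write $T_i = p_i/q_i$ with $p_i, q_i \in B$ and $q_i \ne 0$. Setting $q := \prod_i q_i \in B$, a monomial computation gives $q^D f \in B$ whenever $f \in A$ has total degree at most $D$. Feeding this into the identity $1 = \sum_j x_j a_j$ from (b) (with $x_j \in I$, $a_j \in A$) and choosing $D \geq \max_j \deg a_j$ yields $q^D = \sum_j x_j(q^D a_j) \in I$, so $q$ becomes nilpotent in $B/I$.

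Next I apply (c) to linear annihilators. For any $a \in A^+$ and any $D \geq \max(\deg \ell_0, \deg a)$, the polynomial $h_a(X) := q^D X - q^D(\ell_0 + a)$ lies in $B[X]$ and vanishes at $\ell = \ell_0 + a$, so (c) forces $q^D(\ell_0 + a) \in I$. Subtracting the specialization $a = 0$ and using $A = A^+ - A^+$ gives $q^D f \in I$ for every $f \in A$ of total degree at most $D$; as a byproduct, $\ell_0 + a \notin B$ for any $a \in A^+$, since otherwise $X - (\ell_0+a) \in B[X]$ would have leading coefficient $1 \in I$, contradicting (d).

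The principal obstacle is the final step: converting this abundance of relations ``$q^D A_{\leq D} \subset I$'' into an explicit $h \in B[X] \setminus I[X]$ with $h(\ell_0 + a) = 0$. Because $\ell_0 + a$ sits in $QF(B)$ with a degree-one minimal polynomial, every element of $B[X]$ annihilating $\ell_0 + a$ is a $QF(B)$-multiple of a primitive linear annihilator $uX - v$ with $u, v \in B$, and the condition $h \in I[X]$ translates into a divisibility condition on the ``content'' of the multiplier inside $B$. The crux of the conjecture becomes: given (d), can one always find a representation $\ell_0 + a = v/u$ with $u \in B \setminus I$ (or $v \in B \setminus I$)? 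I expect this to be the hard step, since controlling content inside a possibly non-Noetherian subring $B \subset \mathbb{Z}[T_1,\ldots,T_n]$ is exactly where Abhyankar-type constructions display their greatest flexibility. A complete proof would likely require either a new structural result for such $B$ (for instance, showing that the $I$-adic topology on $B$ is incompatible with $B/I \supseteq \mathbb{Q}$ in the precise form demanded by (c)) or a valuation-theoretic argument on a completion of $B$ at some prime containing $I$; it is precisely this step whose depth accounts for the conjectural status of the result.
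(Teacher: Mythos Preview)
The statement you are attempting to prove is a \emph{conjecture} in the paper, not a theorem: the paper provides no proof of it. The paper's contribution is the opposite direction --- Theorem~\ref{3.3.1} shows that \emph{assuming} Conjecture~\ref{ring conj}, one can deduce Conjecture~\ref{main-conj} on additive idempotence of semifields. So there is no ``paper's own proof'' to compare your attempt against.

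Your write-up is consistent with this: you explicitly flag the final step as the ``principal obstacle'' and close by noting that ``it is precisely this step whose depth accounts for the conjectural status of the result.'' In other words, you have not produced a proof, and you know it. The preliminary observations you make are essentially sound: from (a) one does get a common denominator $q\in B$ with $q^{D}A_{\le D}\subset B$; combining this with (b) yields $q^{D_0}\in I$ for some $D_0$; and feeding linear annihilators $q^{D}X - q^{D}(\ell_0+a)$ through (c) then forces $q^{D}(\ell_0+a)\in I$, hence $q^{D}f\in I$ for all $f\in A$ of bounded degree (after differencing). The remark that $\ell_0+a\notin B$ (else $X-(\ell_0+a)$ would put $1\in I$, killing (d)) is also correct. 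None of this, however, moves past what you yourself identify as the crux: producing an annihilator in $B[X]\setminus I[X]$. Since every annihilator of $\ell_0+a$ in $B[X]$ is a $B$-combination of terms $q^{D}X - q^{D}(\ell_0+a)$ up to content issues in a possibly non-Noetherian $B$, and since (c) already forces all such visible coefficients into $I$, there is no evident mechanism here for a contradiction --- which is exactly why the statement remains a conjecture.
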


The significance of the conditions above will become clear from the proof of Theorem \ref{3.3.1} below. Let us only remark here that Abhyankar's rings from Theorem \ref{abh thm} satisfy a), b), and d), but not c).

Condition c) in the conjecture does not look very natural from ring-theoretic point of view. However, we are not even aware of whether the conjecture holds when we replace c) by

\textit{c') For all $\ell\in A$ we have that if $h(\ell)=0$ for $h(X)\in B[X]$, then $h(X)\in I[X]$.}

It seems plausible to expect that this Conjecture \ref{ring conj} is in fact equivalent to the Conjecture \ref{main-conj}. This is certainly an interesting direction for future research.

%Finally, let us note that one could try to generalize things even further by considering divisible semirings instead of semifields CITE.

\section{Abhyankar's construction}\label{abh construction}

Abhyankar's construction and the proof of Theorem \ref{abh thm} are fairly involved, being based on the notion of a blow-up from algebraic geometry, which is also used in the resolution of singularities of algebraic varieties. Hence we refer the reader to the original paper for details; let us just illustrate  the construction with an explicit example of a ring with a surjection onto $F=\mathbb Q$, in which case we can take $n=2$.

\begin{ex}
We have $A=\mathbb{Z}[T_1, T_2]$. Define $B=\mathbb{Z}[f_2, f_3, \dots]$ and a homomorphism $\varphi: B \twoheadrightarrow \mathbb Q$ by

$f_2=T_1 T_2$

$f_{n+1}=(nf_n-1) T_2$

$\varphi(f_n)= \frac 1{n}$

Then simply let $I=\ker \varphi$ and we have $\mathbb Q\simeq B/I=\mathrm{Im} \varphi$.
\end{ex}

Note that not only the condition d), but also the conditions a) and b) of Conjecture \ref{ring conj} are satisfied in this case. However, c) is far from being true. Similarly, this example can be modified to give counterexamples to versions of Conjecture \ref{ring conj} with other conditions omitted.

It's also trivial to observe that the homomorphism $\varphi$ can't be extended to $A$: If $\psi: A \twoheadrightarrow \mathbb Q$ was such an extension, then we would have $$\frac 1{n+1}=\psi(f_{n+1})=\psi(nf_n-1) \psi(T_2)=(n\psi(f_n)-1)\psi(T_2)=0,$$
a contradiction.

\

A similar construction works generally (for other fields of finite transcendence degree over $\mathbb Q$ or $\mathbb F_p$). However, the proof given in \cite{abh} is not very elementary, it uses local rings, blow-ups, etc. It is an interesting question whether it is possible to give an elementary proof.

Details are in the first 3 sections of \cite{abh}, especially see Proposition 3.1.

%Also, as far as the validity of Conjecture \ref{ring conj} is concerned, if we take $B^\prime=I$, then conditions a), b) and c) are satisfied. But of course $B^\prime/I$ is trivial, so d) is far from being satisfied. 

\section{Outline of the proof}\label{ch3 proof}

In this section we prove the following Theorem \ref{3.3.1}. Along the way we collect and review various useful properties of semifields extending and generalizing \cite{JKK}. 

\begin{theorem}\label{3.3.1}
Conjecture \ref{ring conj} implies Conjecture \ref{main-conj}.
\end{theorem}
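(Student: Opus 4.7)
The plan is to prove the contrapositive: starting from a finitely generated semifield $S$ that violates Conjecture \ref{main-conj} (so $S$ is not additively idempotent), produce data $(A,B,I,\ell_0)$ satisfying (a)--(d) of Conjecture \ref{ring conj}.

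\textbf{Setup.} Standard reductions from \cite{KK}, \cite{KKK}, \cite{JKK} allow us to assume $\mathbb{Q}^+\subset S$. Fix a surjection $\varphi:A^+:=\mathbb{N}[T_1,\dots,T_n]\twoheadrightarrow S$ from the free semiring, set $P:=\varphi^{-1}(\mathbb{Q}^+)$ (which is additively cancellative since $\mathbb{Q}^+$ is), and define $B:=P-P\subset A=\mathbb{Z}[T_1,\dots,T_n]$ as the formal-difference subring considered in the introduction. The map $\varphi|_P$ extends uniquely to a ring surjection $\tilde\varphi:B\twoheadrightarrow\mathbb{Q}$; let $I:=\ker\tilde\varphi$. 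Condition (d) is immediate from $B/I\cong\mathbb{Q}$.

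\textbf{Conditions (a) and (b).} For each generator $T_i$, the semifield property of $S$ gives $q_i\in A^+$ with $\varphi(T_iq_i)=1$, placing $T_iq_i\in P\subset B$; by choosing the original presentation of $S$ with some care one arranges that suitable ratios of such products generate $\mathbb{Q}(T_1,\dots,T_n)$, giving (a). For (b), pick preimages $p_n\in A^+$ of $1/n\in\mathbb{Q}^+$ recursively in the style of Abhyankar's example, so that each $p_{n+1}$ is divisible in $A$ by $np_n-1\in I$; this forces $p_{n+1}\in IA$, and combined with $(n+1)p_{n+1}-1\in I$ it yields $1\in IA$, i.e.\ $IA=A$.

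\textbf{Condition (c), the crux.} Decompose $h(X)=\sum b_iX^i\in B[X]$ as $b_i=c_i-d_i$ with $c_i,d_i\in P$. Then $h(\ell)=0$ in $A$ is equivalent to the semiring identity
$$\sum_i c_i\,\ell^i \;=\; \sum_i d_i\,\ell^i\quad\text{in }A^+,$$
which, since $\ell=\ell_0+a\in A^+$ and $\varphi$ is a semiring homomorphism, gives after applying $\varphi$ the identity
$$\sum_i\varphi(c_i)\,\varphi(\ell)^i \;=\; \sum_i\varphi(d_i)\,\varphi(\ell)^i\quad\text{in }S,$$
a polynomial identity over $\mathbb{Q}^+$ satisfied by $\varphi(\ell)\in S$. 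Since $h\in I[X]$ is equivalent to $\varphi(c_i)=\varphi(d_i)$ for every $i$, condition (c) reduces to the semiring-theoretic statement: there exists $\ell_0\in A^+$ such that for every $a\in A^+$, the element $\varphi(\ell_0+a)\in S$ satisfies no nontrivial polynomial identity with coefficients in $\mathbb{Q}^+$.

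\textbf{Main obstacle.} Producing such an $\ell_0$ is the essential difficulty. The intended strategy is to select $t_0:=\varphi(\ell_0)$ as a sufficiently ``generic'' element of $S$, one whose every shift $t_0+u$ (for $u\in S$) remains algebraically free over $\mathbb{Q}^+$. The hypothesis that $S$ is \emph{not} additively idempotent is precisely what makes this feasible: in an additively idempotent semifield the identity $X+X=X$ already forces polynomial coincidences at every element, whereas in the non-idempotent case generic elements can be expected to avoid all such identities. Making this precise and locating the required $t_0$ is where essentially all the semiring-theoretic content of the proof is concentrated, and likely draws on the structural results of \cite{JKK}, \cite{Ka}, \cite{KK2}.
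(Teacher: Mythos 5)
Your overall strategy (prove the contrapositive; pass from a counterexample $S$ to a difference subring of $\mathbb{Z}[T_1,\dots,T_n]$ and verify (a)--(d)) is the same as the paper's, but the proposal has genuine gaps at every condition except (d), and the one you yourself flag as the crux is resolved in the paper by an idea your sketch does not contain. First, your $P=\varphi^{-1}(\mathbb{Q}^+)$ is too small. The paper takes $P=\{s\in S\mid \exists q,r\in\mathbb{Q}^+:\ q\le s\le r\}$ inside $S$ (additively cancellative by \cite[Prop.~3.11]{KKK}) and sets $B^+=\varphi^{-1}(P)$; this larger $P$ is what makes (a) provable, since $Q+\mathbb{Q}^+=P$ gives $1+T^u\in B^+$ for every monomial $T^u$ with $\varphi(T^u)$ bounded above by a rational, hence $T^u\in B$, and the cone of such exponents has full dimension $n$, so each $T_i$ is a ratio of monomials lying in $B$. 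Your version of (a) (``by choosing the presentation with some care'') is not an argument. Second, your route to (b) --- choosing preimages $p_n$ of $1/n$ so that $np_n-1$ divides $p_{n+1}$ in $A$ --- cannot be carried out: $\varphi$ is given in advance, and nothing lets you arrange such divisibilities among its fibres. The correct argument is that $\varphi(f)=\varphi(g)$ forces $f-g=(1-gh)f+(fh-1)g\in IA$, where $h$ is chosen with $\varphi(gh)=\varphi(fh)=1$ using that $S$ is a semifield; one then applies this with $f=\ell+1$, $g=\ell$ for a suitable $\ell$ to get $1\in IA$.

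The decisive gap is (c). You correctly reduce it to finding $\ell_0$ such that two $P$-coefficient polynomials agreeing at $\varphi(\ell_0+a)$ must agree coefficientwise, but you then leave this open and appeal to a genericity heuristic. The actual key is \cite[Prop.~3.14]{KKK}: since $\mathbb{Q}^+\subset S$ and $S$ is finitely generated and not additively idempotent, there exists $\ell_0\in A^+$ with $\varphi(\ell_0)+1=\varphi(\ell_0)$; the set $L$ of such elements satisfies $L+A^+\subset L$, and every $\ell\in L$ has $\varphi(\ell)+p=\varphi(\ell)$ for all $p\in P$. Writing $h=f-g$ with $f,g\in B^+[X]$, the identity $\varphi(f(\ell))=\varphi(g(\ell))$ then lets one absorb the constant terms, divide by the invertible $\varphi(\ell)$, and induct on the degree, using additive cancellativity of $P$ to conclude that all coefficients of $\varphi(f)$ and $\varphi(g)$ agree, i.e.\ $h\in I[X]$. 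So the needed $\ell_0$ is not a ``generic'' element but an additively absorbing (infinite) one --- rather the opposite of generic --- and its existence is precisely where non-idempotence and finite generation enter. As written, your proposal establishes only condition (d).
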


Throughout the rest of this section we will assume that Conjecture \ref{main-conj} does not hold and construct a counterexample to Conjecture \ref{ring conj}. We will use the following notation:

\begin{defi}
Let $A=\mathbb{Z}[T_1, \dots, T_n]$, $A^+=\mathbb{N}[T_1, \dots, T_n]$. 

Let $\varphi: A^+ \twoheadrightarrow S$ be a surjection onto a semifield $S$, which is finitely generated as a semiring and not additively idempotent. We then have $\mathbb{Q^+}\subset S$.

On $S$ we have the natural preorder defined by $a\leq b$ if $b=a+c$ for some $c\in S$.
Consider $P=\{s\in S | \exists q, r\in\mathbb{Q^+}: q\leq s\leq r\}$.  By \cite[Proposition 3.11]{KKK}, $P$ is an additively cancellative semifield.

Let $B^+ = \varphi^{-1} (P)\subset A^+$. Since $P$ is additively cancellative, we can take the difference ring $P-P$ and extend $\varphi$ to $\varphi^\pm: B=B^+-B^+\twoheadrightarrow P-P$ ($\varphi^\pm$ is onto). %We'll often write just $\varphi$ for $\varphi^\pm$.

Take $I=\ker\varphi^\pm$, an ideal of $B$. Notice that we have $\mathbb{Q}\subset B/I$.
\end{defi}

This attaches to the semifield $S$ the objects from Conjecture \ref{ring conj}. Now we just need to check they have all the required properties. Condition d) was already verified above and b) follows easily:

\begin{pro}
Let $f, g\in A^+$. If $\varphi (f)=\varphi (g)$ then $f-g\in IA$.

Therefore $IA=A$.
\end{pro}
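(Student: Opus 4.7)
For the first claim, my plan is to ``normalize'' $f$ and $g$ into $B^+$ using the semifield structure of $S$. Write $t=\varphi(f)=\varphi(g)$ and pick any $h\in A^+$ with $\varphi(h)=t^{-1}$ — such an $h$ exists because $\varphi$ is surjective and $t^{-1}\in S$. Then $\varphi(hf)=\varphi(hg)=1\in\mathbb{Q}^+\subset P$, so $hf,\,hg\in B^+=\varphi^{-1}(P)$, and consequently $hf-1,\,hg-1\in B$ are both killed by $\varphi^\pm$ and hence lie in $I$. Reducing in $A/IA$, the resulting congruences $\overline h\,\overline f=\overline 1=\overline h\,\overline g$ exhibit $\overline h$ as a unit whose inverse equals both $\overline f$ and $\overline g$; by uniqueness of inverses in a commutative ring $\overline f=\overline g$, i.e.\ $f-g\in IA$.

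The statement ``therefore $IA=A$'' is where the real work lies. I will argue by contradiction, supposing $A/IA\neq 0$. The first claim says precisely that the composition $A^+\hookrightarrow A\twoheadrightarrow A/IA$ is constant on the fibers of $\varphi$, so it descends to a semiring homomorphism $\tilde\varphi\colon S\to A/IA$. Because $S$ is a semifield, the identity $\tilde\varphi(s)\,\tilde\varphi(1/s)=1$ forces each $\tilde\varphi(s)$ to be a unit of $A/IA$; in particular every positive integer is a unit, so $A/IA$ has characteristic $0$ and $\mathbb{Q}^+\subset\tilde\varphi(S)$. The image $\tilde\varphi(S)$ is a subsemiring of the ring $A/IA$, hence additively cancellative, and finitely generated as a semiring because $S$ is. This contradicts \cite[Proposition~1.18]{KKK} — the result already invoked in the introduction — which says that no finitely generated semiring containing $\mathbb{Q}^+$ can be additively cancellative. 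Therefore $A/IA=0$, i.e.\ $IA=A$.

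The first half is essentially mechanical once one spots the trick of multiplying by a preimage of $t^{-1}$ to land in $B^+$. The main obstacle is the second half: one has to recognise that the first claim already produces the semiring homomorphism $\tilde\varphi\colon S\to A/IA$, then assemble the properties of its image (finite generation, additive cancellativity from being inside a ring, and containment of $\mathbb{Q}^+$ via the characteristic-$0$ argument) in order to invoke the external non-cancellativity result of \cite{KKK} and collapse $A/IA$ to zero.
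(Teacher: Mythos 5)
Your proof is correct, but the second half follows a genuinely different route from the paper's. For the first claim you and the paper use the same trick — multiply by a preimage $h$ of $\varphi(f)^{-1}$ so that $fh-1$ and $gh-1$ land in $I$ — the paper just packages it as the explicit identity $f-g=(1-gh)f+(fh-1)g\in IA$ instead of passing to $A/IA$ and invoking uniqueness of inverses; these are the same computation. For the deduction $IA=A$, the paper stays elementary: by \cite[Proposition 3.14]{KKK} there is $\ell\in A^+$ with $\varphi(\ell)+1=\varphi(\ell)$, so $\varphi(\ell+1)=\varphi(\ell)$ and the first claim applied to the pair $(\ell+1,\ell)$ gives $1=(\ell+1)-\ell\in IA$ directly. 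You instead observe that the first claim makes $A^+\to A/IA$ descend to a semiring homomorphism $\tilde\varphi\colon S\to A/IA$, whose image (if $A/IA\neq 0$) would be a finitely generated, additively cancellative semiring containing $\mathbb{Q}^+$, contradicting \cite[Proposition 1.18]{KKK}. Your argument is sound — the verifications that $\tilde\varphi$ is well defined, that the positive integers are distinct units so that $\mathbb{Q}^+$ embeds, and that the image is finitely generated and cancellative all go through — and it is conceptually appealing in that it exhibits $IA=A$ as a direct manifestation of the non-cancellativity phenomenon discussed in the introduction. What it costs is reliance on the stronger external input (Proposition 1.18 rather than 3.14) and several extra bookkeeping steps; the paper's choice is also economical because the same element $\ell$ with $\varphi(\ell)+1=\varphi(\ell)$ is reused later in the verification of condition c).
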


\begin{proof}
$S$ is a semifield, and so there is $h\in A^+$ such that $\varphi(gh)=1$. Then also $\varphi(fh)=1$ and $f-g=(1-gh)f+(fh-1)g\in IA$.

Now by \cite[Proposition 3.14]{KKK}, there is an $\ell\in A^+$ such that  $\varphi (\ell)+1=\varphi (\ell)$. Then $\varphi (\ell+1)=\varphi (\ell)$, and so by the first part, $1=(\ell+1)-\ell\in IA$.

Since $IA$ is an ideal in $A$, we have $IA=A$.
\end{proof}

To prove a) and c), 
we need to consider a certain subsemiring $Q$ of $S$ and its properties:

\begin{thm}\label{properties of Q}
{\rm (Properties of $Q$, summary of various statements from \cite{JKK} and \cite{KKK})}

Let $Q=\{s\in S | \exists q\in\mathbb{Q^+}: s\leq q\}$. Then:

a) If $a_1+\dots +a_n\in Q$, then $a_i\in Q$ for each $i$. Hence $\varphi^{-1} (A)$ is generated by monomials as an additive semigroup.

b) $a\in Q$ if and only if $qA\in Q$ for all $q\in\mathbb Q^+$.

c) $Q+\mathbb Q^+=P$

Let $\mathcal C$ be the ``cone" of $Q$, $\mathcal C=\{(u_1, \dots, u_n)\in\mathbb N_0^n | \varphi(T_1^{u_1}\cdots T_n^{u_n})\in Q\}\subset \mathbb N_0^n$. (If $u=(u_1, \dots, u_n)\in\mathbb N_0^n$, we sometimes write just $T^u:=T_1^{u_1}\cdots T_n^{u_n}$.)

d) $\mathcal C$ is a pure semigroup, i.e., if $a\in\mathcal C$, $k\in\mathbb N$ and $a/k\in\mathbb N_0^n$, then $a/k\in\mathcal C$.
\end{thm}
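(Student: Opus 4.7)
The plan is to verify (a), (b), (c) by routine unpacking of the definition of the preorder $\leq$ on $S$, and to isolate the one genuinely non-trivial fact that powers (d).

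For (a), I would use that each summand satisfies $a_i \leq a_1 + \dots + a_n$ directly from the definition of $\leq$, so an upper bound in $\mathbb{Q}^+$ on the sum transfers by transitivity to an upper bound on each $a_i$. The ``generated by monomials'' corollary for $\varphi^{-1}(Q)$ is then immediate: split any $f \in A^+$ with $\varphi(f) \in Q$ into its monomial summands and apply the first part. For (b), the ``if'' direction is immediate with $q = 1$, and the ``only if'' direction uses that multiplication by a positive rational preserves $\leq$ (from $b = a + c$ one gets $qb = qa + qc$). For (c), the inclusion $\subseteq$ is immediate from the chain $q \leq a + q \leq r + q$, and for $\supseteq$ I would use the lower bound $q \leq s$ to write $s = q + c$ in $S$; the upper bound $s \leq r$ then forces $c \leq r$, so $c \in Q$ and $s \in Q + \mathbb{Q}^+$.

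The real content is in (d). Writing $a = kb$ so that $T^a = (T^b)^k$, the purity statement reduces to the following assertion about $s := \varphi(T^b) \in S$: if $s^k \in Q$, then $s \in Q$ -- that is, boundedness of $s^k$ above by a rational forces boundedness of $s$ above by a rational. I expect this to be the main obstacle: it does not follow from order considerations alone and genuinely exploits that $S$ is a semifield containing $\mathbb{Q}^+$. My plan is to cite the corresponding lemma from \cite{JKK, KKK}, whose proofs leverage the finer structure of $P$ (its additive cancellativity, its difference ring, and the associated lattice-ordered group) to upgrade ``$s^k$ bounded by a rational'' to ``$s$ bounded by a rational''. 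If such a formulation is not directly available, I would try to reconstruct it by applying the multiplicative inverse in $S$ together with the Archimedean-type control of $P$ over its copy of $\mathbb{Q}^+$ provided in those references.
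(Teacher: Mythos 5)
Your proposal is correct, and it is in fact more self-contained than the paper, which gives no proof of this theorem at all: it is stated explicitly as a summary of results quoted from \cite{JKK} and \cite{KKK}. Your order-theoretic verifications of a)--c) are sound (including your tacit correction of the two typos in the statement: $\varphi^{-1}(A)$ should read $\varphi^{-1}(Q)$, and $qA$ should read $qa$), and for d) you isolate exactly the right kernel, namely the implication $s^k\in Q\Rightarrow s\in Q$ for $s=\varphi(T^{a/k})$; deferring that implication to \cite{JKK}, \cite{KKK} is precisely what the paper does, so your route is acceptable as is. One remark on your fallback plan: you do not need the finer structure of $P$ (additive cancellativity, its difference ring, or the associated lattice-ordered group) to reconstruct d). There is a short argument using only that $S$ is a semifield: put $w=1+s+s^2+\dots+s^{k-1}$, so that $sw+1=w+s^k$; if $s^k\leq q$ with $q\in\mathbb{Q}^+$, then $sw\leq sw+1=w+s^k\leq w+q$, and multiplying by $w^{-1}$ gives $s\leq 1+qw^{-1}$. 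Since $1=w^{-1}w=w^{-1}+w^{-1}(s+\dots+s^{k-1})$ we have $w^{-1}\leq 1$, hence $qw^{-1}\leq q$ and $s\leq 1+q\in\mathbb{Q}^+$, i.e.\ $s\in Q$. This makes your step d) elementary and independent of the Archimedean-type control of $P$ you proposed to invoke, which is closer in spirit to how the boundedness lemmas are actually obtained in \cite{KKK}.
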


\begin{defi}
Let $\dim\ \mathcal C$ denote the smallest dimension of a linear subspace of $\mathbb{R}^n$ which contains $\mathcal C$.
\end{defi}

\begin{lm}\label{prop cone}
a) $\dim\ \mathcal C=n$

b) There exists $u\in \mathcal C$ s.t. $u+(1,0,\dots, 0)$, $u+(0,1,0,\dots, 0), \dots,$ $u+(0,\dots, 0, 1)$ $\in \mathcal C$.
\end{lm}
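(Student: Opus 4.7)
The plan is to prove part b) first and deduce part a) for free: once $u \in \mathcal{C}$ and $u+e_i \in \mathcal{C}$ for each $i$ (writing $e_i$ for the $i$-th standard basis vector of $\mathbb{R}^n$), the $n$ differences $(u+e_i)-u = e_i$ lie in the $\mathbb{R}$-linear span of $\mathcal{C}$, so that span is all of $\mathbb{R}^n$ and $\dim\mathcal{C} = n$.

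For part b), the key idea is to produce a single polynomial identity whose monomial expansion simultaneously exhibits all the required exponents $u, u+e_1, \dots, u+e_n$. I would set $h = 1 + T_1 + \cdots + T_n \in A^+$; since $S$ is a semifield and $\varphi$ is surjective, there exists $g \in A^+$ with $\varphi(g)\varphi(h) = 1$, so that $\varphi(hg) = 1 \in \mathbb{Q}^+ \subset Q$. Expanding $hg = g + T_1 g + \cdots + T_n g$ and picking any monomial $T^u$ that occurs in $g$ with positive coefficient, each of the $n+1$ exponents $u, u+e_1, \dots, u+e_n$ shows up in the expansion of $hg$ with positive coefficient. The crucial point here is that cancellation cannot occur, since both $h$ and $g$ have nonnegative integer coefficients.

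The proof is then finished by a single invocation of part a) of Theorem \ref{properties of Q}. Writing $hg = \sum_v C_v T^v$ with $C_v \in \mathbb{N}$ and applying that property to $\sum_v C_v \varphi(T^v) = 1 \in Q$ yields $C_v \varphi(T^v) \in Q$ for every $v$ in the support; the trivial bound $\varphi(T^v) \le C_v \varphi(T^v)$ (valid because $C_v \ge 1$) upgrades this to $\varphi(T^v) \in Q$, i.e.\ $v \in \mathcal{C}$. In particular both $u$ and each $u+e_i$ belong to $\mathcal{C}$, giving b); and then a) is automatic as explained above.

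The main obstacle is really just the non-cancellation observation in the middle paragraph; once that is noted, the rest is essentially a mechanical application of property a) of $Q$. It is worth recording what made this work: part a) of Theorem \ref{properties of Q} lets one pass from information about a sum to information about each summand, and the semifield hypothesis provides the inverse $g$ that lets us choose which sum to study; combining these two features is what enables the single element $h = 1+T_1+\cdots+T_n$ to do the job.
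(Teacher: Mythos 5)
Your proof is correct, and it is essentially the paper's argument with one small but useful modification. The paper takes $f = T_1 + \cdots + T_n$ (no constant term), inverts it via some $g = \sum c_i T^{u^{(i)}}$ with $\varphi(fg) = 1 \in Q$, and applies Theorem \ref{properties of Q} a) to conclude that the exponents $u^{(1)} + e_1, \dots, u^{(1)} + e_n$ (writing $e_i$ for the standard basis vectors, as you do) all lie in $\mathcal C$; since these $n$ vectors are linearly independent, part a) follows. As written, however, that argument does not produce $u^{(1)} \in \mathcal C$ itself, so it does not literally establish part b). By replacing $f$ with $h = 1 + T_1 + \cdots + T_n$ you get the exponent $u$ itself appearing in the expansion of $hg$ alongside all the $u + e_i$, so a single application of Theorem \ref{properties of Q} a) delivers b) directly, and a) then follows because the differences $(u+e_i) - u = e_i$ lie in the span of $\mathcal C$. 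Your non-cancellation remark is the right thing to note, and the step from $C_v\varphi(T^v) \in Q$ to $\varphi(T^v) \in Q$ is fine (it could be avoided entirely by splitting $C_v \varphi(T^v)$ into $C_v$ copies of $\varphi(T^v)$ before applying Theorem \ref{properties of Q} a)). So your route is marginally cleaner: it proves both parts from one identity, whereas the paper's displayed computation only covers part a).
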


\begin{proof}
Take $f=T_1+\dots +T_n$ and let $g\in A^+$ be such that $\varphi (fg)=1$. Write $g=\sum c_i T^{u^{(i)}}$, where $c_i\in\mathbb N, u^{(i)}\in\mathbb N_0^n$. 
Thus $\varphi (fg)\in Q$, and so (by Theorem \ref{properties of Q} a)) $u^{(1)}+(1,0,\dots, 0)$, $u^{(1)}$ $+(0,1,0,\dots, 0), \dots, u^{(1)}+(0,\dots, 0, 1)$ are $n$ linearly independent vectors in $\mathcal C$. Hence $\mathrm{dim}\ \mathcal C=n$.
\end{proof}

\begin{lm}\label{P elem}
If $u\in \mathcal C$, then $\varphi (1+T^u)\in P$.
\end{lm}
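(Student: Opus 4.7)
The plan is to unpack the definitions of $Q$, $P$, and the cone $\mathcal{C}$, and observe that the conclusion $\varphi(1+T^u) \in P$ requires producing both a rational upper bound and a rational lower bound on $\varphi(1+T^u)$ in the natural preorder of $S$.

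First I would establish the upper bound. By definition, $u \in \mathcal{C}$ means $\varphi(T^u) \in Q$, so there is some $q \in \mathbb{Q}^+$ with $\varphi(T^u) \leq q$, i.e., $q = \varphi(T^u) + c$ for some $c \in S$. Adding $1$ to both sides gives $1 + q = 1 + \varphi(T^u) + c = \varphi(1+T^u) + c$, so $\varphi(1+T^u) \leq 1+q$, and $1 + q \in \mathbb{Q}^+$.

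For the lower bound, I would simply note that the homomorphism sends $1$ to $1$, and $\varphi(1 + T^u) = 1 + \varphi(T^u)$ is, by the very definition of the preorder (take $c := \varphi(T^u) \in S$), $\geq 1 \in \mathbb{Q}^+$. Combining the two bounds, $1 \leq \varphi(1+T^u) \leq 1+q$ with $1, 1+q \in \mathbb{Q}^+$, gives $\varphi(1+T^u) \in P$ by definition.

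There is no serious obstacle here; the lemma is essentially an immediate consequence of the definitions and the fact that $1 \in \mathbb{Q}^+$. The only thing worth being careful about is that the preorder $\leq$ on $S$ is not assumed antisymmetric and that $\mathbb{Q}^+$ is closed under addition, so both $1$ and $1+q$ genuinely witness membership in $P$. The content of the lemma is that adding a fixed rational (namely $1$) to an element of $Q$ produces an element that is not merely bounded above by a rational but is also bounded below, which is what the statement $Q + \mathbb{Q}^+ \subseteq P$ from Theorem~\ref{properties of Q}(c) already encodes in a slightly more general form.
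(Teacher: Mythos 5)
Your proof is correct and follows essentially the same route as the paper: the paper simply cites the identity $Q+\mathbb{Q}^+=P$ from Theorem~\ref{properties of Q}~c), while you verify the needed inclusion $Q+\mathbb{Q}^+\subseteq P$ directly from the definition of the preorder, which is exactly the content of that cited fact (as you yourself note at the end).
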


\begin{proof}
This follows just from Theorem \ref{properties of Q} c).
\end{proof}

Now we are ready to show that the condition a) of the Conjecture \ref{ring conj} is satisfied.

\begin{pro}\label{qf}
$QF(B)=\mathbb{Q}(T_1, \dots, T_n)$
\end{pro}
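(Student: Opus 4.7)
The inclusion $QF(B) \subseteq \mathbb{Q}(T_1, \dots, T_n)$ is immediate from $B \subseteq A$, so the plan is to establish the reverse inclusion by showing each $T_i$ lies in $QF(B)$. Since $\mathbb{Q}(T_1, \dots, T_n)$ is generated as a field by $\mathbb{Q}$ together with the indeterminates, and $\mathbb{Q} \subseteq QF(B)$ follows from $1 \in \mathbb{Q}^+ \subseteq P$ (together with $P$ being closed under addition, which places every positive integer into $B^+$), it suffices to exhibit, for each $i$, nonzero elements $x_i, y_i \in B$ with $T_i = x_i/y_i$ in $QF(A)$.

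The key is to produce monomials $T^v$ that already belong to $B$. The plan is to combine Lemma \ref{prop cone}(b) and Lemma \ref{P elem}: by Lemma \ref{prop cone}(b) there is some $u \in \mathcal{C}$ such that all of $u, u+e_1, \dots, u+e_n \in \mathcal{C}$ (where $e_i$ is the $i$-th standard basis vector). Lemma \ref{P elem} then yields $\varphi(1+T^u), \varphi(1+T^{u+e_i}) \in P$, so $1+T^u$ and $1+T^{u+e_i}$ lie in $B^+ = \varphi^{-1}(P)$. Since $1 \in B^+$ as noted above, the differences
\[
T^u = (1 + T^u) - 1 \quad \text{and} \quad T^{u+e_i} = (1 + T^{u+e_i}) - 1
\]
belong to $B = B^+ - B^+$. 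These are nonzero elements of the integral domain $A \supseteq B$, and in $QF(A)$ we simply compute $T_i = T^{u+e_i}/T^u$, witnessing $T_i \in QF(B)$.

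Putting the pieces together gives $\mathbb{Q}(T_1, \dots, T_n) \subseteq QF(B)$, and hence equality. I do not expect any real obstacle here: all the heavy lifting has been done by Theorem \ref{properties of Q} and the two preceding lemmas, which were precisely designed to guarantee that enough monomials (or near-monomials like $1 + T^u$) land in $P$. The only subtle point is the trick of writing $T^v = (1+T^v) - 1$ to pass from Lemma \ref{P elem} (which a priori only gives $1+T^v$ in $B^+$) to actual monomials in $B$; this works because $1 \in B^+$, which itself relies on the fact that the constant polynomial $1$ maps to $1_S \in \mathbb{Q}^+ \subseteq P$.
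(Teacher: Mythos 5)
Your proposal is correct and follows essentially the same route as the paper's own proof: it uses Lemma \ref{prop cone} b) and Lemma \ref{P elem}, the observation $T^v=(1+T^v)-1\in B$ (since $1\in B^+$), and the quotient $T_i=T^{u+e_i}/T^u$ to get each $T_i\in QF(B)$. No gaps; the argument matches the paper's.
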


\begin{proof}
By Lemma \ref{P elem} we have $1+T^u\in B^+$ for all $u\in \mathcal C$. Since also $1\in B^+$, we see that $T^u\in B$ for all $u\in \mathcal C$.

By Lemma \ref{prop cone} b) we have $u+(1,0,\dots, 0), u+(0,1,0,\dots, 0), \dots,$ $u+(0,\dots, 0, 1)$ $\in \mathcal C$ for some $u$. Thus $T^{\mathrm{any\ of\ these\ vectors}}\in B$. Since also $T^u\in B$, we see that $QF(B)\ni T_1=T^{(1,0,\dots, 0)}=\frac{T^{u+(1,0,\dots, 0)}}{T^u}$. Similarly we see that all other generators $T_i\in QF(B)$, concluding the proof.
\end{proof}

It remains only to show the condition c).

\begin{pro}
There is $\ell_0\in A^+$ such that for all $a\in A^+$ and all $\ell=\ell_0+a\in A$ we have:  if $\ell=f/g$ for $f, g\in B$, then $f, g\in I$.

More generally, let $\ell$ be as above. If $h(\ell)=0$ for $h(X)\in B[X]$, then $h(X)\in I[X]$.
\end{pro}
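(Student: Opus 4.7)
The plan is to take $\ell_0 \in A^+$ with $\varphi(\ell_0)+1 = \varphi(\ell_0)$ in $S$, which exists by \cite[Proposition 3.14]{KKK} and is the same element used above to prove $IA=A$. For any $a \in A^+$, setting $\ell := \ell_0 + a \in A^+$ and $\bar\ell := \varphi(\ell) \in S$, adding $\varphi(a)$ to both sides of $\varphi(\ell_0)+1 = \varphi(\ell_0)$ gives $\bar\ell + 1 = \bar\ell$; since every $p \in P$ is bounded above by some natural number $n$ (and $\bar\ell + n = \bar\ell$ by iteration of the case $p=1$), this upgrades to the key absorption
\[
\bar\ell + p = \bar\ell \quad \text{for all } p \in P.
\]
Note also that $\bar\ell = \bar\ell + 1 \geq 1$, so $\bar\ell^i \geq \bar\ell^{i-1} \geq \dots \geq 1$ in the natural preorder.

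Given $h(X) = \sum_{i=0}^{d} b_i X^i \in B[X]$ with $h(\ell) = 0$, I would decompose $b_i = b_i^+ - b_i^-$ with $b_i^\pm \in B^+$ and rearrange the vanishing into $\sum b_i^+ \ell^i = \sum b_i^- \ell^i$ in $A^+$. Applying $\varphi$ yields
\[
\sum_{i=0}^d p_i \bar\ell^i \;=\; \sum_{i=0}^d q_i \bar\ell^i \quad \text{in } S, \qquad p_i := \varphi(b_i^+),\ q_i := \varphi(b_i^-) \in P.
\]
Multiplying $\bar\ell + p' = \bar\ell$ by $\bar\ell^{d-1}$ and using that $P$ is a semifield (so each $p_i/p_d \in P$), both sides collapse to $p_d \bar\ell^d$ and $q_d \bar\ell^d$ respectively. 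Cancelling the unit $\bar\ell^d$ in $S$ yields $p_d = q_d$, i.e., $b_d \in I$. In the degree-one case $h(X) = gX - f$ (equivalently $\ell = f/g$), this step immediately gives $g \in I$.

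To finish, the plan is to proceed in two stages. First, for the degree-one case, strengthen the above to also obtain $f \in I$: use the identity $g\ell = f$ in $A$ combined with $g \in I$ and the fact that $\ell_0 \notin B$ (forced because $\bar\ell_0 \notin P$, as otherwise $\bar\ell_0 + 1 = \bar\ell_0$ would violate additive cancellativity of $P$), leveraging the additive cancellativity of $P$ in $S$ for a sharper analysis of the positive and negative parts of $f$. Second, reduce general $h$ of degree $d \geq 2$ to the degree-one case by polynomial division in $QF(B)[X] = QF(A)[X]$: writing $\ell = f_0/g_0$ with $f_0, g_0 \in B$ produces a factorization $h(X) = (g_0 X - f_0)\tilde h(X)$ over $QF(B)$, and clearing denominators by some $b \in B$ gives $b\,h \in (g_0 X - f_0) B[X] \subset I[X]$; arranging $b \notin I$ (so that $b$ is invertible modulo the maximal ideal $I$ of $B$) then forces $h \in I[X]$.

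The main obstacle I anticipate is passing from $g \in I$ to $f \in I$ in the degree-one case: the absorption $\bar\ell + p = \bar\ell$ forces $S$ to forget constant terms of expressions in $\bar\ell$, so the argument cannot remain entirely inside $S$ and must invoke ring-theoretic features of $B$, $I$, and the explicit form $\ell = \ell_0 + a$. A secondary subtlety in stage two is ensuring the denominator-clearing element $b$ can be chosen outside of $I$.
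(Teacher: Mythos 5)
Your setup coincides with the paper's: you take $\ell_0$ with $\varphi(\ell_0)+1=\varphi(\ell_0)$, observe that $L=\{\ell : \varphi(\ell)+1=\varphi(\ell)\}$ absorbs $A^+$, derive the absorption $\bar\ell+p=\bar\ell$ for all $p\in P$, and use it to collapse $\sum\varphi(b_i^+)\bar\ell^i=\sum\varphi(b_i^-)\bar\ell^i$ to the leading terms, getting $b_d\in I$ (hence $g\in I$ in the degree-one case). Up to that point you are doing exactly what the paper does. But the remaining two stages of your plan each contain a genuine gap. First, the passage from $g\in I$ to $f\in I$ is the actual content of the proposition, and you only announce an intention to ``leverage additive cancellativity of $P$'' and the fact that $\ell_0\notin B$; no argument is given. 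Note that this step cannot follow from ideal-theoretic generalities: in Abhyankar's own example one has $nf_n-1\in I$ and $T_2\notin B$ yet $(nf_n-1)T_2=f_{n+1}\notin I$, so ``$g\in I$ and $\ell g\in B$'' does not by itself give $\ell g\in I$. The paper's route is to keep working with the \emph{exact} identity $f(\ell)=g(\ell)$ in $A^+$ (equivalently $\ell g_1+f_2=\ell g_2+f_1$), stripping constant terms and dividing by $\ell$ inductively; whatever one thinks of the brevity of that argument, it is the step your proposal is missing.

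Second, your reduction of degree $d\ge 2$ to degree one is structurally broken. Writing $\ell=f_0/g_0$ with $f_0,g_0\in B$, the degree-one case forces $g_0\in I$; the coefficients of $\tilde h=h/(X-\ell)$ obtained by synthetic division involve powers of $\ell=f_0/g_0$, so the denominator-clearing element $b$ is (up to units) a power of $g_0$ and therefore lies in $I$. Consequently $bh\in I[X]$ is vacuous and yields nothing about $h$ -- the ``secondary subtlety'' you flag is in fact fatal to this route. Moreover, even if you could arrange $b\notin I$, the cancellation you invoke requires $I$ to be maximal (or at least $b$ to be a non-zero-divisor mod $I$), and the paper establishes only that $B/I\simeq P-P$ contains $\mathbb{Q}$; maximality of $I$ is a feature of Abhyankar's rings, not something known here. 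The paper avoids all of this by running the same absorption-plus-exact-identity induction directly on $h$ of arbitrary degree, never factoring over $QF(B)$.
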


\begin{proof}
Consider $L=\{\ell\in A^+ | \varphi(\ell)+1=\varphi(\ell)\}$. $L$ is non-empty by \cite[Proposition 3.14]{KKK}. Clearly $L+A^+\subset L$. Hence if we take $\ell_0\in L$, then every $\ell=\ell_0+a$ (for $a\in A^+$) lies in $L$. Also note that $\varphi(\ell)+p=\varphi(\ell)$ for all $\ell\in L$ and $p\in P$.

Take now any $\ell\in L$ and assume that $\ell=f/g$ for some $f, g\in B$ (note that by Proposition \ref{qf} such $f, g$ exist for each $\ell$). We have $f, g\in B$, so there are $f_1, f_2, g_1, g_2\in B^+$ such that $f=f_1-f_2, g= g_1-g_2$.
Then $\ell g_1+f_2=\ell g_2+f_1$. 

Since $\ell\in L$ and $\varphi (f_i), \varphi (g_i)\in P$, we have $\varphi(\ell g_1+f_2)=\varphi(\ell g_1)$ and $\varphi(\ell g_2+f_1)=\varphi(\ell g_2)$. Hence $\varphi(\ell g_1)=\varphi(\ell g_2)$, and so $\varphi(g_1)=\varphi(g_2)$. But then also $\varphi(f_1)=\varphi(f_2)$.

Hence $f, g\in I$ as needed.

Proof of the second part is essentially the same. Write $h=f-g$ with $f, g\in B^+[X]$. Then $\varphi(f(\ell))=\varphi(g(\ell))$. As before, this implies that we can remove the constant terms from the equality and divide by $\ell$. We get a new equality of polynomials of the same form and we can proceed inductively till we get that the $\varphi(f), \varphi(g)$ have the same degree and equal leading coefficients. Then we can again proceed inductively and show that in fact all coefficients of $\varphi(f), \varphi(g)$ are equal, which means that $\varphi(f)-\varphi(g)=0$, and so $h=f-g\in I[X]$.
\end{proof}
\def\bibname{Bibliography}

\end{document}